\numberwithin{equation}{section}
\newtheorem{theorem}[equation]{Theorem}
\newtheorem{proposition}[equation]{Proposition}
\newtheorem{cor}[equation]{Corollary}
\newtheorem{lemma}[equation]{Lemma}
\newtheorem{remark}[equation]{Remark}
\begin{document}
\title[Non-Linear Roth]{A Non-Linear Roth Theorem for Sets of Positive Density}
\author{Ben Krause}
\address{
Department of Mathematics,
Caltech \\
Pasadena, CA 91125}
\email{benkrause2323@gmail.com}
\date{\today}

\begin{abstract}
Suppose that $A \subset \mathbb{R}$ has positive upper density, 
\[ \limsup_{|I| \to \infty} \frac{|A \cap I|}{|I|} = \delta > 0,\]
and $P(t) \in \mathbb{R}[t]$ is a polynomial with no constant or linear term, or more generally a \emph{non-flat} curve. Then for any $R_0 \leq R$ sufficiently large, there exists some $x_R \in A$ so that
\[ \inf_{R_0 \leq T \leq R} \frac{|\{ 0 \leq t < T : x_R - t \in A, \ x_R - P(t) \in A \}|}{T} \geq c_P \cdot \delta^2 \]
for some absolute constant $c_P > 0$, that depends only on $P$.
\end{abstract}

\maketitle


\section{Introduction}
A beautiful result in Euclidean Ramsey theory, due to Furstenberg, Katznelson, and Weiss, \cite{FKW}, concerns the presence of additive structure in sets of positive density inside of the plane, i.e.\ those (measurable) sets for which
\[ d^*(A) := \limsup_{|Q| \to \infty} \frac{ |A \cap Q| }{|Q|} > 0;\]
here $\{Q\}$ are axis-parallel cubes.
\begin{theorem}\label{t:FKW}
Suppose that $A \subset \mathbb{R}^2$ has $d^*(A) > 0$. Then there exists a threshold $r_0$ so that for every $r \geq r_0$ there exists $x_r,y_r \in A$ so that
\[ |x_r - y_r| = r.\]
\end{theorem}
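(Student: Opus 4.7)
The plan is to establish Theorem~\ref{t:FKW} by combining Furstenberg's correspondence principle with a spectral analysis of spherical averages on a measure-preserving $\RR^2$-system.

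First, via the Furstenberg correspondence, I would construct a probability space $(X,\mathcal{B},\mu)$ carrying a measure-preserving $\RR^2$-action $\{T_v\}_{v \in \RR^2}$ together with a measurable set $B \subset X$ of mass $\mu(B) = d^*(A) = \delta$, satisfying
\[ \mu(B \cap T_v B) \leq d^*\bigl(A \cap (A-v)\bigr) \]
for every $v \in \RR^2$. Given this, it suffices to show that for every sufficiently large $r$ there exists $\omega \in S^1$ with $\mu(B \cap T_{r\omega} B) > 0$: the correspondence will then furnish $x_r,y_r \in A$ with $|x_r-y_r|=r$.

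Next, I would study the spherical averaging operators
\[ S_r f(x) := \int_{S^1} f(T_{r\omega} x) \, d\sigma(\omega), \qquad f \in L^2(X,\mu), \]
via the spectral theorem for the induced unitary $\RR^2$-action. Writing $U_v = \int_{\RR^2} e^{2\pi i v\cdot\xi}\, dE(\xi)$, one obtains the functional-calculus identity $S_r = \int_{\RR^2} \widehat{d\sigma}(r\xi)\, dE(\xi)$, where $\widehat{d\sigma}$ is the Fourier transform of the normalized arc-length measure on $S^1$ (a Bessel function $J_0$). Since $|\widehat{d\sigma}| \leq 1$ and $\widehat{d\sigma}(r\xi) \to 0$ pointwise in $\xi \neq 0$ as $r \to \infty$ (Riemann--Lebesgue, or the classical Bessel asymptotics $J_0(t) \sim \sqrt{2/\pi t}\,\cos(t-\pi/4)$), dominated convergence inside the spectral integral gives $S_r f \to \Pi f$ in $L^2$ as $r \to \infty$, where $\Pi = E(\{0\})$ is the projection onto the $\RR^2$-invariant subspace.

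Applying this with $f = \mathbf{1}_B$ and invoking Cauchy--Schwarz against the constant function $1$, I would deduce
\[ \int_X \mathbf{1}_B \cdot S_r \mathbf{1}_B \, d\mu \ \longrightarrow\ \|\Pi \mathbf{1}_B\|_2^2 \ \geq\ \mu(B)^2 \ =\ \delta^2 \ >\ 0 \]
as $r \to \infty$. Hence for all $r \geq r_0$ this integral is strictly positive, forcing $\mu(B \cap T_{r\omega}^{-1} B) > 0$ for $\omega$ in a positive-measure subset of $S^1$, and the correspondence in Step~1 produces the desired pair in $A$. The main obstacle is precisely the $L^2$-convergence $S_r f \to \Pi f$; the spectral viewpoint reduces this to the qualitative decay $\widehat{d\sigma}(r\xi) \to 0$, which is clean. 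A direct Fourier attack in $\RR^2$ avoiding the correspondence would instead require balancing a low-frequency main term against a high-frequency error controlled only by the slow decay $|\widehat{d\sigma}(\xi)| \lesssim |\xi|^{-1/2}$; this tension between the box scale $N$, the frequency cutoff, and $r$ becomes tight for small $\delta$ in dimension two, which is why routing through the spectral/ergodic machinery is the cleanest path.
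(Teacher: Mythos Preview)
Your argument is correct and is essentially the original ergodic-theoretic proof of Furstenberg--Katznelson--Weiss: correspondence principle plus spectral convergence $S_r \to E(\{0\})$ driven by the pointwise decay $\widehat{d\sigma}(r\xi)\to 0$ for $\xi\neq 0$. The steps are sound; in particular the dominated-convergence passage inside the spectral integral is justified because the spectral measure of $\mathbf{1}_B$ is finite and $|\widehat{d\sigma}|\leq 1$.

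Note, however, that the paper does \emph{not} prove Theorem~\ref{t:FKW} at all: it is quoted as background, attributed to \cite{FKW}, and the paper simply remarks that it was later reproved geometrically \cite{FM}, probabilistically \cite{Q}, and Fourier-analytically \cite{B}. So there is no ``paper's own proof'' to compare against. What is worth observing is that your route is precisely the ergodic one the paper mentions first, whereas the method the paper actually builds on is Bourgain's direct Fourier-analytic argument from \cite{B}: there one works on a large box, pairs $\mathbf{1}_A$ against a mollified circle average, and splits into a low-frequency main term (giving $\gtrsim \delta^2$) and a high-frequency error controlled by the curvature decay $|\widehat{d\sigma}(\xi)|\lesssim |\xi|^{-1/2}$. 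Your closing paragraph correctly identifies the tension in that direct approach; the ergodic/spectral route sidesteps it at the cost of losing any quantitative dependence of $r_0$ on $\delta$, which is exactly the trade-off the paper is interested in reversing.
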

This result was first proven by ergodic theoretic techniques, but it has since been recovered by different methods: geometric \cite{FM} and probabilistic \cite{Q}, and -- most significant for this paper -- Fourier analytic \cite{B}.

Indeed, in his paper, \emph{A Szemer\'{e}di type theorem for sets of positive density in $\mathbb{R}^k$} \cite{B}, Bourgain developed a powerful but elementary Fourier analytic approach 
through which he not only recovered Theorem \ref{t:FKW}, but also was able to reveal the presence of additional additive structure inside of dense subsets of the plane, via the following pinned variant of Theorem \ref{t:FKW}.
\begin{theorem}\label{t:B}
Suppose that $A \subset \mathbb{R}^2$ has $d^*(A) > 0$. Then there exists a threshold $r_0$ so that for every $r \geq r_0$ there exists $x_r \in A$ so that for all $r_0 \leq s \leq r$, there exists some $y_s \in A$ so that
\[ |x_r - y_s| = s.\]
\end{theorem}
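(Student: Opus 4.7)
The plan is to adapt Bourgain's Fourier analytic approach from \cite{B}, enhancing the single-scale argument that proves Theorem \ref{t:FKW} so that it outputs uniform positivity across all scales $s \in [r_0, r]$ at a common pin $x_r$. First I would localize: using $d^*(A) > 0$, choose an axis-parallel cube $Q$ of side length $L \gg r$ with $|A \cap Q|/|Q| \geq \delta/2$, and set $f := 1_{A \cap Q}$. To detect distances via convolutions, I thicken the sphere slightly: let $\sigma_s$ be the normalized arclength measure on the circle of radius $s$, convolved with a bump of radius $\eta s$ for a small parameter $\eta$ to be chosen. Then positivity of $(f * \sigma_s)(x)$ witnesses the existence of some $y \in A$ with $|x - y| \in [(1-\eta)s, (1+\eta)s]$, and a compactness argument as $\eta \to 0$ converts approximate distances to exact ones.

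Next, the single-scale estimate. Using the Bessel asymptotic $\hat{\sigma}_s(\xi) = c(s|\xi|)^{-1/2}\cos(2\pi s|\xi| - \pi/4) + O((s|\xi|)^{-3/2})$, decompose $\sigma_s = \sigma_s^{\mathrm{low}} + \sigma_s^{\mathrm{high}}$, where $\sigma_s^{\mathrm{low}}$ is a radial bump at scale $s$ (essentially $s^{-2}$ times a smooth indicator of the disk of radius $s$) and $\sigma_s^{\mathrm{high}}$ is the oscillatory remainder. Away from $\partial Q$ one has $(f * \sigma_s^{\mathrm{low}})(x) \gtrsim \delta$, while Plancherel yields
\[
\|f * \sigma_s^{\mathrm{high}}\|_{L^2}^2 \,\lesssim\, \|\widehat{\sigma_s^{\mathrm{high}}}\|_\infty^2 \, \|f\|_2^2 \,\lesssim\, s^{-1}|Q|.
\]
Consequently, for all but an exceptional set $E_s \subset Q$ of measure $\lesssim s^{-1}\delta^{-2}|Q|$, one has $(f * \sigma_s)(x) \gtrsim \delta$.

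Now upgrade to the pinned statement. For $x \in A \cap Q$ define $B_x := \{s \in [r_0, r] : (f * \sigma_s)(x) < c\delta^2\}$. Fubini and the single-scale estimate yield
\[
\int_{A \cap Q} |B_x|\, dx \,\lesssim\, \int_{r_0}^r s^{-1}\delta^{-2}|Q|\, ds \,\lesssim\, \delta^{-2}|Q|\log(r/r_0),
\]
which is $o(|A \cap Q| \cdot r)$ once $r_0$ is large. Pigeonholing, some $x_r \in A \cap Q$ has $|B_{x_r}| \ll r$. To convert $|B_{x_r}| \ll r$ into $B_{x_r} = \emptyset$, note that $s \mapsto (f * \sigma_s)(x)$ is $O(\eta^{-1})$-Lipschitz after the $\eta s$-smoothing (by differentiating under the convolution). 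Hence it suffices to bound $(f * \sigma_s)(x_r)$ on a net of $O(\eta^{-1}\log(r/r_0))$ scales, and a union bound on the exceptional sets $E_s$ across this net, combined with continuity in $s$, picks out the desired pin.

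The main obstacle is the last step: there is tension between the smoothing parameter $\eta$ (needed for Lipschitz control in $s$) and the number of scales $\sim \eta^{-1}\log(r/r_0)$ one must union-bound over. Balancing these against the power-saving $s^{-1}$ in the single-scale error requires choosing $\eta \sim \delta^C$ and exploiting that $L^\infty$-in-$s$ fluctuation is controlled by an $H^1$-in-$s$ norm on short scales. A secondary subtlety near $\partial Q$ forces $L \gg \delta^{-C} r$ and the discarding of an $O(r/L)$ fraction of near-boundary pins so that the main-term lower bound $\delta$ is not corrupted by the tail of $\sigma_s^{\mathrm{low}}$. Once these balances are set, the argument delivers Theorem \ref{t:B}.
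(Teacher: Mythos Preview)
The paper does not actually prove Theorem \ref{t:B}; it is quoted as a result of Bourgain from \cite{B} and serves only as motivation. So there is no ``paper's own proof'' to compare against here, but one can still compare your proposal to Bourgain's argument and to the mechanism the paper does implement for its own pinned result (Propositions \ref{p:Rothish} and \ref{p:key}).

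Your single-scale estimate is where the argument breaks. You claim
\[
\|f * \sigma_s^{\mathrm{high}}\|_{2}^{2} \;\lesssim\; \|\widehat{\sigma_s^{\mathrm{high}}}\|_\infty^{2}\,\|f\|_2^2 \;\lesssim\; s^{-1}|Q|,
\]
but the stationary-phase decay $|\widehat{\sigma_s}(\xi)| \lesssim (s|\xi|)^{-1/2}$ gives only $\|\widehat{\sigma_s^{\mathrm{high}}}\|_\infty = O(1)$, since the supremum is realized near $|\xi|\sim s^{-1}$. There is no gain in $s$ from Plancherel alone, and without that gain the Fubini step $\int_{r_0}^r |E_s|\,ds \ll |A\cap Q|\cdot r$ simply fails. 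The subsequent net/Lipschitz balancing you flag as the ``main obstacle'' is then moot; the difficulty is earlier and is not a matter of parameter tuning.

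What actually drives Bourgain's pinned theorem --- and what this paper reproduces in its bilinear setting via Proposition \ref{p:key} --- is an \emph{energy increment in frequency annuli}, not a union bound over scales. One shows that if $\langle f, f*\sigma_s\rangle$ (or here $\langle \mathbf{1}_A, \inf_{r\in J} B_r(\mathbf{1}_A,\mathbf{1}_A)\rangle$) is abnormally small on an interval of scales, then $\widehat{\mathbf{1}_A}$ must carry mass $\gtrsim \delta^{C}$ on the associated frequency band, exactly as in \eqref{e:lowerbound}. For well-separated scale intervals these bands are disjoint, and Plancherel ($\|\widehat{\mathbf{1}_A}\|_2^2=\delta$) caps the number of bad intervals by $O(\delta^{-C})$. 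The pinned statement then follows by contradiction and rescaling, as the paper indicates before Theorem \ref{t:main}. Your proposal is missing this orthogonality-across-scales mechanism entirely; replacing the fictitious $s^{-1}$ gain with the annulus/energy argument is the fix.
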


To the extent that the core of Bourgain's argument amounted -- essentially -- to appropriate applications of the uncertainty principle, his methods have proven quite robust in addressing other problems in Euclidean Ramsey theory, see for instance \cite{CMP}, \cite{D+}, \cite{HLM}, \cite{LM3}, as well as in the discrete context, \cite{LM}, \cite{LM1}, \cite{LM2}, \cite{M}.

In this note, we apply his method in the {bi-linear} setting, as we consider the issue of Non-Linear Roth's Theorem (in the Euclidean context). 

The first progress towards understanding non-linear Roth-type patterns
\[ \{ x, x + t, x+P(t) \}, \ P \in \mathbb{R}[-] \]
inside non-trivial subsets of the real line was made by Bourgain, \cite{Broth}. In particular, he proved the following theorem.
\begin{theorem}\label{t:Broth}
Let $N \geq 1$, $\delta > 0$ be small, and let $A \subset [0,N]$ have $|A| \geq \delta N$, and $d \geq 2$ be arbitrary. Then there exists 
\[ x,x-t,x-t^d \in A\]
for some $t \geq c_\delta \cdot N^{1/d}$, for some absolute constant $c_\delta$.
\end{theorem}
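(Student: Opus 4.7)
The plan is to apply Bourgain's Fourier-analytic, uncertainty-principle style argument in this bilinear setting, exploiting the natural scale separation between the ``linear'' step $t$ and the ``polynomial'' step $t^d$. After rescaling $y = x/N$, we may assume $A \subset [0,1]$ with $|A| \geq \delta$; writing $t = \tau N^{1/d}$ converts the triple $\{x,x-t,x-t^d\} \subset [0,N]$ into $\{y, y - \epsilon\tau, y-\tau^d\} \subset [0,1]$ with $\epsilon := N^{-(d-1)/d} \to 0$, and it suffices to find $y\in A$ and $\tau \in [c_\delta,1]$ for which this triple lies entirely in $A$. The key feature is that $\epsilon\tau \ll \tau^d$: the linear displacement is parametrically smaller than the polynomial one.

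The natural object to estimate is the trilinear counting form
$$\Lambda_\epsilon := \int 1_A(y) \int 1_A(y - \epsilon\tau)\, 1_A(y - \tau^d)\, \psi(\tau)\, d\tau\, dy,$$
with $\psi$ a smooth bump supported on an interval $[c_\delta, 1]$. Passing to the Fourier side,
$$\Lambda_\epsilon = \iint \widehat{1_A}(-\xi-\eta)\,\widehat{1_A}(\xi)\,\widehat{1_A}(\eta)\, K_\epsilon(\xi,\eta)\, d\xi\, d\eta,$$
where $K_\epsilon(\xi,\eta) := \int e^{-2\pi i (\epsilon\xi\tau + \eta\tau^d)} \psi(\tau)\, d\tau$. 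The zero-frequency regime $|\xi|, |\eta| \ll 1$ produces a main term of size $\approx \delta^3 \int \psi$, via the uncertainty principle and the density of $A$.

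The remaining contributions will be bounded using a Littlewood--Paley decomposition $1_A = f^\flat + f^\sharp$ at frequency scale $\sim 1/\epsilon$. The low-frequency piece $f^\flat$ is approximately invariant under the short translation $y \mapsto y - \epsilon\tau$; replacing $1_A(y-\epsilon\tau)$ by $f^\flat(y)$ collapses the trilinear form into an essentially one-variable convolution against the measure pushed forward from $\psi(\tau)\,d\tau$ under $\tau \mapsto \tau^d$, recovering the expected $\delta^3$ behaviour. The high-frequency piece $f^\sharp$ must contribute negligibly; this is secured by van der Corput and stationary-phase estimates on $K_\epsilon(\xi,\eta)$ that exploit the non-degeneracy of the second derivative $d(d-1)\eta\tau^{d-2}$ of the phase in $\tau$.

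The principal obstacle is the resonant regime where the phase admits a stationary point $\tau_* \in \supp\psi$, i.e.\ where $\epsilon\xi + d\eta\tau_*^{d-1} = 0$: there the oscillatory gain from van der Corput is at its weakest and does not immediately beat the $L^2$-size $\|1_A\|_2 \sim \delta^{1/2}$ of the high-frequency tail. This is overcome by averaging/pigeonholing over dyadic subintervals of the $\tau$-range (equivalently, over dyadic windows for $t$): any given frequency pair $(\xi,\eta)$ is resonant for only a thin subcollection of windows, so averaging disperses the resonances. Choosing $c_\delta$ (and $N$) appropriately then forces $\Lambda_\epsilon \geq \tfrac{1}{2}\delta^3 \int \psi > 0$, yielding the claimed pattern for some $t \geq c_\delta N^{1/d}$.
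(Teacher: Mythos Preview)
The paper does not give a self-contained proof of this theorem; it attributes the result to Bourgain \cite{Broth} and records that it (and its extension in \cite{DGR}) follows from the lower bound $\langle \mathbf{1}_A, B_1(\mathbf{1}_A,\mathbf{1}_A)\rangle \geq c_{\delta,\|P\|}$ for $A\subset[0,1]$. The paper's own argument, for the stronger Proposition~\ref{p:Rothish}, works directly with the single--scale averages $B_k$ of \eqref{e:Br}: one decomposes both inputs into low/medium/high frequency pieces at scales tied to $k$, the low--low interaction produces the main term $\gtrsim \delta^3$ via Lemma~\ref{l:mart}, the high pieces are killed by the scale--type decay of Proposition~\ref{p:scale}, and only the medium pieces survive; an $L^2$--energy pigeonhole then shows that the medium contribution can be large at only $O_\delta(1)$ dyadic scales $k$.

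Your outline is in the same spirit, but the rescaling you perform introduces a genuine gap. After setting $\epsilon=N^{-(d-1)/d}$ you repeatedly use that $\epsilon\tau\ll \tau^d$ (``the linear displacement is parametrically smaller''), and your Littlewood--Paley cut is placed at the single scale $1/\epsilon$; the conclusion is then obtained by ``choosing $c_\delta$ (and $N$) appropriately.'' But $N$ is given, not chosen: the theorem is stated for every $N\geq 1$ with $c_\delta$ depending only on $\delta$. At $N=1$ one has $\epsilon=1$, there is no scale separation, and your single--scale cut at $1/\epsilon$ gives no gain. This case is precisely the one covered by the paper's proposition, and there the argument does \emph{not} rescale: the pigeonholing is over the dyadic scales $r$ of the averaging in $t$, and the main term at every scale comes from the full low--low interaction through Lemma~\ref{l:mart}, not from collapsing one factor to $f^\flat(y)$. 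Your ``collapsed'' form $\int \mathbf{1}_A\, f^\flat\, (\mathbf{1}_A*\sigma)$ is a two--point (S\'ark\"ozy--type) quantity that still requires its own lower bound, which you have not supplied. If you drop the $\epsilon$--rescaling and instead run the frequency decomposition and the dyadic--scale pigeonhole directly on $B_r(\mathbf{1}_A,\mathbf{1}_A)$ for $r$ ranging over $[c_\delta N^{1/d}, N^{1/d}]$, your van der Corput and averaging heuristics line up with Bourgain's argument and with the proof of Proposition~\ref{p:key} here.
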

In \cite{DGR}, this result was extended to handle polynomial curves $P(t) \in \mathbb{R}[t]$ without constant or linear terms.

Both results are a consequence of the following proposition. With
\[ B_r(f,g)(x) := B_r^P(f,g)(x) := \frac{1}{r} \int_0^r f(x-t)g(x-P(t)) \ dt,\]
\begin{proposition}
Let $\delta > 0$ be small, and let $A \subset [0,1]$ have $|A| \geq \delta.$ If $P(t)$ is a polynomial as above, then there exists a lower bound
\[ \langle \mathbf{1}_A , B_1(\mathbf{1}_A, \mathbf{1}_A) \rangle \geq c_{\delta,\|P\|} ,\]
for some $c_{\delta,\|P\|} > 0$ which depends on $\delta$ and $\|P\|$, the $\ell^1$ sum of the coefficients of $P$.
\end{proposition}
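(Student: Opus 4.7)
I plan to follow Bourgain's Fourier-analytic approach. Set $f := \mathbf{1}_A$ and introduce the trilinear form
\[
T(f_1,f_2,f_3) := \int_0^1 \int_\RR f_1(x)\, f_2(x-t)\, f_3(x-P(t))\, dx\, dt,
\]
so that $\langle f, B_1(f,f)\rangle = T(f,f,f)$. Plancherel in $x$ recasts this as
\[
T(f_1,f_2,f_3) = \iint \hat{f_1}(-\xi-\eta)\, \hat{f_2}(\xi)\, \hat{f_3}(\eta)\, m(\xi, \eta)\, d\xi\, d\eta,
\]
with trilinear symbol $m(\xi,\eta) := \int_0^1 e^{-2\pi i(\xi t + \eta P(t))}\, dt$. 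The guiding heuristic is that the low-frequency part of $\hat f$ produces a main term of size $\sim \delta^3$, while the high-frequency part is annihilated by van der Corput-type cancellation of $m$.

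Fix a frequency threshold $N = N(\delta, \|P\|)$ to be determined, and decompose $f = f_L + f_H$ where $f_L := f * \phi$ is a nonnegative physical-space smoothing at scale $N^{-1}$, so $f_L \geq 0$, $\int f_L \geq \delta$, and $\hat{f_L}$ is essentially concentrated on $|\xi| \lesssim N$. Expand
\[
T(f,f,f) = T(f_L,f_L,f_L) + (\text{seven mixed terms}),
\]
where every mixed term contains at least one factor of $f_H$. I would bound the main term from below and each mixed term from above, then balance the two by choice of $N$.

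For the main term, $f_L$ varies on the slow spatial scale $N^{-1}$ and, by Chebyshev, takes values $\geq \delta/2$ on a set of measure $\gtrsim \delta$. Restricting the $t$-integration to a short subinterval $[0, s]$ on which $|t|$ and $|P(t)| \leq \|P\| t^2$ (using that $P$ has no constant or linear term) are both $\ll \delta/N$, a Lipschitz comparison gives $f_L(x-t)\, f_L(x-P(t)) \gtrsim f_L(x)^2$ pointwise on this set, so that $T(f_L,f_L,f_L) \geq c_1(\delta, \|P\|, N) > 0$. For each mixed term, the Fourier support of $f_H$ localizes the $(\xi,\eta)$-integration to $\max(|\xi|,|\eta|) \gtrsim N$, where a van der Corput bound $|m(\xi, \eta)| \leq C(\|P\|) \max(|\xi|, |\eta|)^{-\alpha}$ with $\alpha = \alpha(\deg P) > 0$, combined with Plancherel and $|\hat f| \leq |A| \leq 1$, yields a bound of the form $C_2(\|P\|)\, N^{-\alpha}$. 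Choosing $N$ large enough, polynomially in $\delta^{-1}$ and $\|P\|$, makes the total error strictly smaller than half the main term.

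The principal obstacle is the van der Corput bound uniform in the coefficients of $P$: the derivatives of the phase $\xi t + \eta P(t)$ can nearly vanish at different points of $[0,1]$ depending on the coefficients $a_2, \ldots, a_d$, so extracting $|m(\xi,\eta)| \lesssim \max(|\xi|,|\eta|)^{-\alpha}$ with a constant depending only on $\|P\|$ requires a dyadic decomposition of $[0,1]$ and a case analysis on which of $|\xi|$ and $|\eta P^{(k)}(t)|$ dominates on each subinterval. A secondary subtlety is reconciling the simultaneous need for $f_L$ to be pointwise nonnegative, for the main-term estimate, and to have essentially compactly supported Fourier transform, for the mixed-term estimate; this can be handled by using a Schwartz bump $\phi$ whose Fourier transform has rapid decay and treating the tails of $\hat{f_L}$ beyond $|\xi| \lesssim N$ as additional errors, and may also require iterating the high/low decomposition across several dyadic frequency scales to recover a favorable balance when $\alpha = 1/\deg P$ is small.
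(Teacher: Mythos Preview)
Your outline follows the spirit of Bourgain's original argument in \cite{Broth} (and its extension in \cite{DGR}), which is \emph{not} how the present paper recovers the statement: here the result falls out of Proposition~\ref{p:Rothish} via Corollary~\ref{c:Rothish}, and the engine is the bilinear maximal inequality of Theorem~\ref{t:Bil2} together with the scale--type decay of Proposition~\ref{p:scale}, packaged into the contradiction argument of Proposition~\ref{p:key}.  So a correct version of your sketch would constitute a genuinely different, more elementary route.

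That said, as written there is a real gap in the balancing step.  Your main--term bound comes from restricting $t$ to an interval $[0,s]$ with $s \ll \delta/N$ (so that the Lipschitz comparison $f_L(x-t)\approx f_L(x)$ is valid at spatial scale $N^{-1}$).  This produces
\[
T(f_L,f_L,f_L) \;\gtrsim\; s \cdot |\{f_L \geq \delta/2\}| \cdot (\delta/2)^3 \;\sim\; \frac{\delta^5}{N},
\]
a lower bound that \emph{decays} in $N$.  On the error side, van der Corput on the phase $\xi t + \eta P(t)$ yields at best $|m(\xi,\eta)| \lesssim \max(|\xi|,|\eta|)^{-\alpha}$ with $\alpha \leq 1/2$ (and typically $\alpha = 1/d$), so after any honest $L^2$ pairing the mixed terms are no smaller than $c\,N^{-\alpha}$.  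For the main term to dominate you would need $\delta^5/N \gg N^{-\alpha}$, i.e.\ $\alpha > 1$, which is unattainable.  (A secondary issue: the step ``$|m|\lesssim \max(|\xi|,|\eta|)^{-\alpha}$ combined with Plancherel and $|\hat f|\leq 1$ gives $O(N^{-\alpha})$'' hides a divergent double integral; pointwise symbol decay with $\alpha<1$ is not summable over $(\xi,\eta)\in\RR^2$, so one really needs a $TT^*$ or Sobolev--smoothing argument here, not a direct symbol bound.)

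The fix is to replace the frequency decomposition of $f$ by a \emph{scale} comparison of the operator: write
\[
\langle \mathbf 1_A, B_1(\mathbf 1_A,\mathbf 1_A)\rangle \;=\; \langle \mathbf 1_A, B_\lambda(\mathbf 1_A,\mathbf 1_A)\rangle \;+\; \langle \mathbf 1_A, (B_1-B_\lambda)(\mathbf 1_A,\mathbf 1_A)\rangle
\]
for a small parameter $\lambda$.  The first term is bounded below by $c\,\delta^3$, \emph{uniformly in $\lambda$}, by the martingale--type inequality recorded here as Lemma~\ref{l:mart} (this is exactly where the paper's main--term input comes from, and it is what your Lipschitz argument was trying to substitute for).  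The second term is then controlled by an oscillatory--integral/$TT^*$ estimate that genuinely gains a power of $\lambda$; telescoping over dyadic scales if necessary, one chooses $\lambda=\lambda(\delta,\|P\|)$ so that the error is $\ll \delta^3$.  Your closing remark about ``iterating the high/low decomposition across several dyadic frequency scales'' is pointing in this direction, but the crucial change is that the main term must be $\gtrsim \delta^3$ \emph{independently} of the auxiliary parameter, and that is what Lemma~\ref{l:mart} (not a short--$t$ restriction) delivers.
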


In this paper, we amplify this proposition as follows. First, we allow for the presence of \emph{non-flat} $P$, functions introduced and studied by Lie in his treatment of bilinear Hilbert transforms with curvature, \cite{Lie1} and \cite{Lie2}, in the definition of $B_r = B_r^P$. Roughly speaking, non-flat curves are locally differentiable curves which do not ``resemble a line" near the origin or $\infty$. For instance, in addition to the types of polynomials discussed above, real analytic functions which vanish to degree two at the origin are non-flat, as are real laurent polynomials of the form 
\[ \sum_{-n}^m a_j t^j, \ a_{-m}, a_n \neq 0, \ n,m \geq 2,\]
or even
linear combinations of functions of the form 
\[ |t|^\alpha | \log |t| |^\beta, \ \alpha,\beta \in \mathbb{R}, \ \alpha \neq 0,1.\]
For a more precise definition, see \cite[\S 2]{Lie1}.

Our result is then the following.

\begin{proposition}\label{p:Rothish}
Suppose that $(0,1] = \bigcup_{J \in \mathcal{P}} J$ is an \emph{admissible} partition of intervals, in that each $J$ contains at least one dyadic rational of the form $2^{-k}$. If $A \subset [0,1]$ has $|A| \geq \delta$, then there exists a subset $\mathcal{Q} \subset \mathcal{P}$ so that 
\begin{equation}\label{e:cp}
\inf_{J \in \mathcal{Q} } |\langle \mathbf{1}_A, \inf_{r \in J} B_r(\mathbf{1}_A,\mathbf{1}_A) \rangle| \geq c_{P} \cdot \delta^3 
\end{equation}
for some absolute $c_{P} > 0.$ Moreover, there exists some absolute constant $C_{P}$ so that
\[
|\{ J : J \in \mathcal{P} \smallsetminus \mathcal{Q}\}| \leq C_{P} \cdot \delta^{-5} \log (\delta^{-1}) .
\]
\end{proposition}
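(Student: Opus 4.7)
The plan is to adapt Bourgain's Fourier-analytic approach from \cite{B,Broth} to the bilinear operator $B_r$. Since every $J \in \mathcal{P}$ contains a dyadic rational $r_J = 2^{-k_J}$, I would first establish the lower bound at the representative scales $r_J$ and then transfer to the pointwise infimum $\inf_{r \in J}B_r$ by controlling the short variation
\[
V(J)(x) := \sup_{r \in J} \bigl| B_r(\mathbf{1}_A, \mathbf{1}_A)(x) - B_{r_J}(\mathbf{1}_A, \mathbf{1}_A)(x) \bigr|,
\]
which yields the pointwise bound $\inf_{r \in J} B_r(\mathbf{1}_A, \mathbf{1}_A) \geq B_{r_J}(\mathbf{1}_A, \mathbf{1}_A) - V(J)$.

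For the main term at a single scale, I would write $\mathbf{1}_A = \delta \mathbf{1}_{[0,1]} + f$ with $\int f = 0$, so that trilinear expansion gives
\[
\langle \mathbf{1}_A, B_{r_J}(\mathbf{1}_A, \mathbf{1}_A) \rangle = \delta^3 + E(r_J),
\]
where the ``random'' contribution produces $\delta^3$ (up to negligible boundary effects at small scales) and $E(r_J)$ collects the cross terms containing at least one copy of $f$. Each such cross term is a bilinear form with symbol
\[
m_r(\xi, \eta) = \frac{1}{r}\int_0^r e^{-2\pi i (\xi t + \eta P(t))}\, dt,
\]
and the non-flatness of $P$ yields stationary-phase decay of $m_r$ away from the low-frequency region, precisely as exploited by Lie \cite{Lie1,Lie2}. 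Plancherel together with the mean-zero nature of $f$ then produces quantitative single-scale bounds on both $|E(r_J)|$ and $\langle \mathbf{1}_A, V(J)\rangle$.

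The final ingredient is an $L^2$-in-scales orthogonality estimate of the form
\[
\sum_{J \in \mathcal{P}} \bigl( |E(r_J)|^2 + \langle \mathbf{1}_A, V(J)\rangle^2 \bigr) \lesssim_P \delta \log(\delta^{-1}),
\]
after which Chebyshev's inequality confines the set of $J$ on which the combined error defeats $c_P \delta^3$ to cardinality at most $\delta \log(\delta^{-1})/\delta^6 = \delta^{-5}\log(\delta^{-1})$, as claimed. I expect the main obstacle to be this $L^2$-summation: the single-scale decay coming from non-flat stationary phase is essentially standard, but promoting scale-by-scale bounds into genuine orthogonality across dyadic scales in a bilinear setting is delicate, and the $\log(\delta^{-1})$ factor strongly suggests a Rademacher--Menshov type decomposition of $m_r$ into frequency-adapted pieces, paired with Lie-style bilinear square-function estimates to control the pointwise short variation $V(J)$.
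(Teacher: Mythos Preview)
Your overall architecture---isolate a $\delta^3$ main term and then bound the number of bad $J$ by an energy pigeonhole---matches the paper's, but the mechanism is different. The paper does not pick a single representative scale $r_J$ and then control a short variation $V(J)$. Instead it reduces everything to Proposition~\ref{p:key}: if $\int_A \inf_{l\le r\le k} B_r(\mathbf 1_A,\mathbf 1_A)\ll c_0\delta^3$ on the \emph{full} dyadic range $[l,k]$ contained in $J$, then $\widehat{\mathbf 1_A}$ must carry $L^2$ mass $\gtrsim\delta^3$ on the annulus $\{\delta^C 2^l\lesssim|\xi|\lesssim\delta^{-C}2^k\}\cup\{\delta^C 2^{2l}\lesssim|\xi|\lesssim\delta^{-C}2^{2k}\}$. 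That proposition is proved with a low/medium/high splitting $f=f_L+f_M+f_H$ adapted to the whole interval $[l,k]$, using Lemma~\ref{l:mart} for the low--low piece, paraproduct and scale-type decay (Proposition~\ref{p:scale}) to discard the high-frequency interactions, and---crucially---the bilinear maximal inequality of Theorem~\ref{t:Bil2} together with Cauchy--Schwarz to pass from the surviving $\sup_r|B_r((\mathbf 1_A)_M,\cdot)|$ terms to $\|(\mathbf 1_A)_M\|_2$. Proposition~\ref{p:Rothish} then follows by Bessel/Plancherel: since the $J\in\mathcal P$ partition $(0,1]$, the associated frequency annuli overlap with multiplicity $O(\log\delta^{-1})$ (each is widened by $C\log\delta^{-1}$ dyadic scales on either side), so at most $C_P\,\delta^{-5}\log\delta^{-1}$ of them can each carry mass $\gtrsim\delta^3$ out of the total $\|\widehat{\mathbf 1_A}\|_2^2=\delta$.

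Relative to this, your plan has two real gaps. First, an admissible $J$ may span arbitrarily many dyadic scales, so your $V(J)=\sup_{r\in J}|B_r-B_{r_J}|$ is a \emph{long} variation, not a short one; controlling $\langle\mathbf 1_A,V(J)\rangle$ is then essentially as hard as controlling $\sup_k B_k$, which is precisely what Theorem~\ref{t:Bil2} supplies and what you never invoke. Second, the $\log(\delta^{-1})$ in the final count does not come from a Rademacher--Menshov decomposition of $m_r$ or from bilinear square functions; it comes from the $\delta$-dependent widening of the frequency annuli in Proposition~\ref{p:key}. A Rademacher--Menshov argument would produce a logarithm in the number of scales rather than in $\delta^{-1}$, and your claimed bound $\sum_J\bigl(|E(r_J)|^2+\langle\mathbf 1_A,V(J)\rangle^2\bigr)\lesssim_P\delta\log\delta^{-1}$ is left entirely conjectural for the trilinear cross terms carrying two or three copies of $f$.
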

\begin{remark}
For polynomial $P$, both $c_P,C_P$ depend only on $\|P\|$.
\end{remark}
The following corollary, a quantitative improvement over \cite{DGR}, immediately presents.

\begin{cor}\label{c:Rothish}
In the setting of Proposition \ref{p:Rothish}, for any $\epsilon >0$, there exists an absolute constant $c_{\epsilon, P}$ so that 
\[ \inf_{1 \geq r > 0} |\langle \mathbf{1}_A, B_r(\mathbf{1}_A,\mathbf{1}_A) \rangle| \geq \frac{ c_{\epsilon,P} } { 2^{\delta^{-5 - \epsilon}} } \cdot c_{P} \cdot \delta^3, \]
where $c_{P}$ is as in \eqref{e:cp}, and similarly $c_{\epsilon,P}$ is determined by $\|P\|$ for polynomial $P$.
\end{cor}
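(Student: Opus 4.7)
The plan is an iterative bootstrap based on Proposition \ref{p:Rothish}. Writing $F(r) := |\langle \mathbf{1}_A, B_r(\mathbf{1}_A, \mathbf{1}_A)\rangle|$ and $B := C_P \delta^{-5} \log(\delta^{-1})$, Proposition \ref{p:Rothish} applied to any admissible partition gives $F(r) \geq c_P \delta^3$ off a union of at most $B$ ``bad'' intervals. The challenge in passing to Corollary \ref{c:Rothish} is converting this ``all but $B$ intervals'' conclusion into a uniform pointwise lower bound on $F(r)$ for every $r \in (0,1]$.

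Fix $\epsilon > 0$ and set $K := \lceil \delta^{-5-\epsilon} \rceil$. I would construct a nested sequence of admissible partitions $\mathcal{P}_0, \mathcal{P}_1, \ldots, \mathcal{P}_K$ of $(0,1]$, each refining the bad intervals of its predecessor. Start with a coarse $\mathcal{P}_0$ whose intervals each carry many dyadic rationals, so that bad intervals can be further subdivided into admissible pieces. Applying Proposition \ref{p:Rothish} to $\mathcal{P}_0$ yields a good set $\mathcal{Q}_0$ with complement $\mathcal{B}_0$ satisfying $|\mathcal{B}_0| \leq B$. Refine each $J \in \mathcal{B}_0$ into admissible sub-intervals (using available $2^{-k}$ inside $J$) to form $\mathcal{P}_1$, and reapply Proposition \ref{p:Rothish}. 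Each refinement costs a multiplicative factor of $1/2$ in the lower bound on $F(r)$: among the sub-intervals of a bad parent, at most $B$ are again bad, so a pigeonhole step produces a good sub-interval, at the cost of a factor $1/2$ coming from an accompanying averaging argument (replacing an $L^\infty$ infimum over $J$ by one over a good sub-interval of comparable mass).

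After $K$ iterations, the residual bad set has been driven to measure zero: at each step, the total count of bad intervals stays $\leq B$ while the scale of each bad interval contracts by a factor $\gtrsim \delta^\epsilon$ (the ratio of sub-intervals per bad parent to $B$), so $K \approx \delta^{-5-\epsilon}$ iterations suffice to cover $(0,1]$ with good intervals at some stage. Collecting the multiplicative losses produces the factor $2^{-K} = 2^{-\delta^{-5-\epsilon}}$ in the final lower bound, with the absolute constant $c_{\epsilon, P}$ absorbing the combinatorial overhead of the refinement.

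The principal technical obstacle is preserving admissibility across refinements: a strict dyadic shell $(2^{-k-1}, 2^{-k}]$ admits no admissible subdivision, since it contains only the single dyadic rational $2^{-k}$. The initial partition $\mathcal{P}_0$ must therefore be coarse enough that each interval carries many dyadic rationals to sustain subdivision across all $K$ stages; near $r = 0$ the abundance of $2^{-j}$ makes this easy, but it constrains the construction elsewhere. A secondary obstacle is attaining the ``$1/2$ per refinement'' loss rate rather than a worse one; this requires a pigeonhole-with-margin step exploiting the fact that the number of available sub-intervals per bad parent greatly exceeds $B$. For polynomial $P$, the constants $c_P$ and $C_P$ depend only on $\|P\|$, and this dependence is preserved through the iteration, yielding the stated form of $c_{\epsilon, P}$.
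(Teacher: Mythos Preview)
Your iterative refinement scheme does not close. The central claim --- that each refinement costs a multiplicative factor of $1/2$ --- is not justified and does not follow from Proposition~\ref{p:Rothish}. Knowing that some good sub-interval $J'\subset J$ of a bad parent $J$ satisfies $\inf_{r\in J'}F(r)\ge c_P\delta^3$ tells you nothing about $F(r)$ for $r\in J\setminus J'$; the inequality $\inf_{J'}\ge\inf_J$ runs in the wrong direction for your purpose, and the unspecified ``averaging argument'' cannot reverse it. After $K$ iterations you have only established $F\ge c_P\delta^3$ on the union of all good intervals discovered so far, with \emph{no} degradation whatsoever --- so either you conclude $F\ge c_P\delta^3$ on a dense set (hence everywhere by continuity of $r\mapsto F(r)$, which is too strong), or you conclude nothing on the residual bad points. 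In neither case does a factor $2^{-K}$ appear. You also correctly flag, but do not resolve, the admissibility obstruction: dyadic shells cannot be subdivided admissibly, so the refinement cannot even be run at the natural scales.

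The missing observation, and the reason the paper regards the corollary as immediate, is monotonicity: since $\mathbf 1_A\ge 0$, the map
\[
r\ \longmapsto\ r\,\langle \mathbf 1_A,B_r(\mathbf 1_A,\mathbf 1_A)\rangle \ =\ \int_A\int_0^r \mathbf 1_A(x-t)\,\mathbf 1_A(x-P(t))\,dt\,dx
\]
is non-decreasing, so $F(r)\ge (s/r)\,F(s)$ whenever $0<s\le r$. Now apply Proposition~\ref{p:Rothish} \emph{once}, to the dyadic partition $\{(2^{-k-1},2^{-k}]\}_{k\ge 0}$: at most $B=C_P\,\delta^{-5}\log(\delta^{-1})$ shells are bad. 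For any $r\in(0,1]$ there is some $s\le r$ in a good shell with $s/r\ge 2^{-B-O(1)}$, since at most $B$ bad shells can separate $r$ from the nearest good shell below it; hence $F(r)\ge 2^{-B-O(1)}c_P\delta^3$. Finally $C_P\,\delta^{-5}\log(\delta^{-1})\le\delta^{-5-\epsilon}$ once $\delta$ is small depending on $\epsilon$ and $P$, which gives the stated bound with the remaining range of $\delta$ absorbed into $c_{\epsilon,P}$.
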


Our main result follows from Proposition \ref{p:Rothish} by arguing by contradiction and rescaling as in \cite{B}; this argument is by now standard, and has appeared in the above-discussed works, and so we omit it.

\begin{theorem}\label{t:main}
Suppose $A \subset \mathbb{R}$ has positive upper density $d^*(A) = \delta$. Then for every $R \geq R_0$ sufficiently large there exists an $ x_R \in A$ so that
\[ \inf_{R_0 \leq T \leq R} \frac{|\{ 0 \leq t \leq T : x_R -t \in A, \ x_R - P(t) \in A \}|}{T} \gtrsim_{P} \delta^2.\]
\end{theorem}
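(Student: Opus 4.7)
The plan is to derive Theorem \ref{t:main} from Proposition \ref{p:Rothish} by the contradiction-and-rescaling template of Bourgain \cite{B}, adapted to the non-linear setting. I would argue by contradiction: assume that for some $\delta > 0$ and $\eta \ll c_P \delta^2$, there is no workable $R_0$, so there exist arbitrarily large $R_n$ such that every $x \in A$ admits some scale $T_{x,n} \in [R_0, R_n]$ with $B_{T_{x,n}}^P(\mathbf{1}_A, \mathbf{1}_A)(x) < \eta$. The goal is to contradict this via a single-scale instance of Proposition \ref{p:Rothish}.

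Using $d^*(A) = \delta$, for each $n$ I would select a window $I_n = [a_n, a_n + N_n]$ of length $N_n$ comfortably larger than both $R_n$ and $\sup_{|t| \leq R_n} |P(t)|$ (so that the full pattern at all scales $T \leq R_n$ fits in $I_n$ for $x$ in the central portion), with $|A \cap I_n|/N_n \geq \delta/2$. Normalizing via $\tilde A_n := (A \cap I_n - a_n)/N_n \subset [0, 1]$ yields $|\tilde A_n| \geq \delta/2$; the change of variables $t = N_n \tau$ converts $B_T^P(\mathbf{1}_A, \mathbf{1}_A)(x)$ into $B_r^{P_n}(\mathbf{1}_{\tilde A_n}, \mathbf{1}_{\tilde A_n})(\tilde x)$ with $r = T/N_n$, $\tilde x = (x - a_n)/N_n$, and $P_n(\tau) := P(N_n \tau)/N_n$. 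Since non-flatness is a scale-invariant property in the sense of \cite[\S 2]{Lie1}, the constants $c_P, C_P$ in Proposition \ref{p:Rothish} applied to $P_n$ can be taken uniform in $n$.

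Next I would invoke Proposition \ref{p:Rothish} on $\tilde A_n$ and $P_n$ with a dyadic admissible partition $\mathcal{P}_n$ of $(0, 1]$ whose cardinality greatly exceeds $C_P \delta^{-5} \log \delta^{-1}$. This produces a subset $\mathcal{Q}_n \subset \mathcal{P}_n$, with $|\mathcal{P}_n \setminus \mathcal{Q}_n| \leq C_P \delta^{-5} \log \delta^{-1}$, on which $\langle \mathbf{1}_{\tilde A_n}, \inf_{r \in J} B_r^{P_n} \rangle \geq c_P \delta^3$ for $J \in \mathcal{Q}_n$. Markov's inequality inside each $J \in \mathcal{Q}_n$ shows that the level set $E_J := \{\tilde x \in \tilde A_n : \inf_{r \in J} B_r^{P_n}(\tilde x) \geq c_P \delta^2/2\}$ has measure at least $c_P \delta^3/2$; summing over $J$ and pigeonholing in $\tilde x$ yields some $\tilde x_n \in \tilde A_n$ lying in $E_J$ for a positive fraction of $J \in \mathcal{Q}_n$. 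The admissibility (each $J$ contains some $2^{-k}$) plus the comparability $B_T(x) \gtrsim B_{T'}(x)$ for $T/2 \leq T' \leq T$ would then promote this pointwise control from ``most scales $r$'' to ``every $r \in [R_0/N_n, 1]$'' up to a constant factor. Pulling back to $A$, the lift $x_n := a_n + N_n \tilde x_n \in A$ satisfies $B_T^P(x_n) \gtrsim c_P \delta^2$ at every $T \in [R_0, R_n]$, contradicting $\eta \ll c_P \delta^2$ at the putative bad $T = T_{x_n, n}$.

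The main obstacle is precisely the pinning step from ``most $J$ good'' to ``every $T$ good''. A naive pigeonhole only places $\tilde x_n$ in $E_J$ for a $c_P\delta^2$-fraction of intervals, whereas the theorem demands uniform control across $[R_0, R_n]$. Closing this gap requires a careful interplay between the logarithmic size of the exceptional set $\mathcal{P}_n \setminus \mathcal{Q}_n$, the dyadic admissibility forcing every $T$ to lie within a factor of $2$ of a good scale, and the quasi-monotonicity of $B_T$; one may also need to iteratively refine $\mathcal{P}_n$ to drive the bad fraction to zero. A secondary technical point is the uniformity of the non-flat parameters of $P_n$ as $n$ varies, which is automatic for polynomials (up to rescaling leading coefficients) and follows from Lie's definitions in the general case \cite{Lie1}.
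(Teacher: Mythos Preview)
Your overall template---contradiction, localization to a large window, rescaling to $[0,1]$, and appeal to Proposition \ref{p:Rothish}---is indeed what the paper has in mind (it omits the proof as ``standard,'' following \cite{B}). However, the way you have set up the contradiction and the subsequent pinning step misfires, and the obstacle you flag at the end is real and not resolved by what you wrote.

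The issue begins with the negation. Failure of Theorem \ref{t:main} does not merely give, for a \emph{fixed} $R_0$, arbitrarily large $R_n$ with a bad scale somewhere in $[R_0,R_n]$ for each $x$; it says that \emph{for every} candidate threshold $R_0$ there is such an $R$. Iterating this yields an unbounded sequence of pairwise disjoint intervals $[R_0^{(1)},R^{(1)}], [R_0^{(2)},R^{(2)}],\dots$ with the property that for \emph{every} $x\in A$ and every $j$ there is some $T\in[R_0^{(j)},R^{(j)}]$ with $B_T(\mathbf{1}_A,\mathbf{1}_A)(x)<\eta$. After localizing to a window chosen large enough to contain more than $C_P\delta^{-5}\log(\delta^{-1})$ of these intervals and rescaling, the images $J_j\subset(0,1]$ satisfy $\inf_{r\in J_j}B_r(\mathbf{1}_{\tilde A},\mathbf{1}_{\tilde A})(\tilde x)<\eta$ for \emph{all} $\tilde x\in\tilde A$, hence
\[
\langle \mathbf{1}_{\tilde A},\, \inf_{r\in J_j} B_r(\mathbf{1}_{\tilde A},\mathbf{1}_{\tilde A})\rangle \;<\; \eta\,|\tilde A| \;<\; c_P\,(\delta/2)^3
\]
for every $j$. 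Completing $\{J_j\}$ to an admissible partition $\mathcal{P}$, every $J_j$ is forced into $\mathcal{P}\smallsetminus\mathcal{Q}$, so $|\mathcal{P}\smallsetminus\mathcal{Q}|$ exceeds $C_P\delta^{-5}\log(\delta^{-1})$, contradicting Proposition \ref{p:Rothish}. No pigeonhole on $x$ is required at all: the point is that the negation already makes \emph{every} $x$ bad on each $J_j$, so the \emph{integrated} infimum is small, and the cardinality bound on $\mathcal{P}\smallsetminus\mathcal{Q}$ does the work. Your route---partitioning a single window $[R_0,R_n]$ dyadically and trying to pigeonhole from $|E_J|\gtrsim\delta^3$ to a single $\tilde x$ good at \emph{all} scales---only places $\tilde x$ in a $\delta^3$-fraction of the $E_J$, and neither quasi-monotonicity of $B_T$ nor iterated refinement of $\mathcal{P}_n$ upgrades that to uniform control; this is exactly the gap you identified, and it stems from not iterating the negation in $R_0$.
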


For ease of presentation, we will establish our main results only in the case of $P(t) = t^2$, as the complications that arise in increasing the generality are essentially notational.

\subsection{Acknowledgement}
This paper, like many of my papers, was inspired by the work of Jean Bourgain; his impact on my mathematics has been profound.

\subsection{Notation}
Here and throughout, $e(t) := e^{2\pi i t}$. Throughout, $C$ will be a large number which may change from line to line. 

We will henceforth \emph{re-define} 
\begin{equation}\label{e:Br}
B_k (f,g)(x):= \int f(x-t) g(x-t^2) \rho_k(t) \ dt
\end{equation} 
where $\rho_k(t):= 2^k \rho(2^k t)$, and $\rho$ is an appropriate bump function. 

We will also make use of certain Fourier projection operators: we let $\phi$ denote a Schwartz function which satisfies
\[ \mathbf{1}_{|\xi| \leq 1/8} \leq \widehat{\phi} \leq \mathbf{1}_{|\xi| \leq 1/2},
\]
and set $\phi_k(t) := 2^k \phi(2^k t)$.

We will make use of the modified Vinogradov notation. We use $X \lesssim Y$, or $Y \gtrsim X$, to denote the estimate $X \leq CY$ for an absolute constant $C$. We use $X \approx Y$ as shorthand for $Y \lesssim X \lesssim Y$. We also make use of big-O notation: we let $O(Y )$ denote a quantity that is $\lesssim Y$. If we need $C$ to depend on a parameter, we shall indicate this by subscripts, thus for instance $X \lesssim_{\delta} Y$ denotes the estimate $X \leq C_{\delta} Y$
for some $C_{\delta}$ depending on $\delta$. We analogously define $O_{\delta}(Y)$.

\section{Preliminaries}
Before turning to the proof, we need to collect various results from Euclidean harmonic analysis. The first is essentially a martingale inequality, and was observed by Bourgain in \cite{Broth}; see \cite{DGR} for a proof.
m
\begin{lemma}\label{l:mart}
Suppose that $1 \geq f \geq 0$ is supported in $[0,1]$. Then for any $r,s > 0$,
\[ \int f \cdot \rho_r*f \cdot \rho_s*f
\gtrsim \left( \int f \right)^3.\]
\end{lemma}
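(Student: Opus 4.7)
The plan is to reduce the inequality to a dyadic martingale computation, losing only absolute constants along the way. First I would pass from the smooth convolution $\rho_k * f$ to a dyadic conditional expectation. Writing $E_j f$ for the average of $f$ on dyadic intervals of length $2^{-j}$, and using that the nonnegative bump $\rho$ satisfies $\rho(t) \geq c_0$ on $[-1/2, 1/2]$, a short computation gives $\rho_k * f(x) \gtrsim E_{k+1} f(x)$ pointwise: indeed, the $(k{+}1)$-dyadic interval containing $x$ sits inside $[x - 2^{-k-1}, x + 2^{-k-1}]$, on which $\rho_k \geq c_0 \cdot 2^k$. Thus it suffices to prove $\int f \cdot E_{r+1} f \cdot E_{s+1} f \, dx \gtrsim \delta^3$ where $\delta := \int f$; real values of $r, s$ are handled by rounding up.

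Next, assuming WLOG that $r \leq s$, the function $E_{r+1} f$ is constant on the coarser $(r{+}1)$-dyadic grid and hence measurable with respect to the finer $(s{+}1)$-$\sigma$-algebra. Using self-adjointness of conditional expectation,
\[
\int f \cdot E_{r+1} f \cdot E_{s+1} f \, dx
= \int E_{s+1}(f) \cdot E_{r+1} f \cdot E_{s+1} f \, dx
= \int E_{r+1} f \cdot (E_{s+1} f)^2 \, dx.
\]

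For the main estimate I would argue interval by interval. On any dyadic interval $I$ of length $2^{-r-1}$, let $\alpha_I$ denote the value of $E_{r+1} f$ on $I$, and for each $(s{+}1)$-dyadic subinterval $J \subset I$ let $\beta_J$ denote the value of $E_{s+1} f$ on $J$. Then $\alpha_I$ is the average of the $\beta_J$'s, so by convexity of $x \mapsto x^2$, $\int_I (E_{s+1} f)^2 \, dx \geq |I| \alpha_I^2$. Multiplying by $\alpha_I$ and summing over $I$ gives $\int E_{r+1} f \cdot (E_{s+1} f)^2 \, dx \geq \int (E_{r+1} f)^3 \, dx$. A final Jensen estimate, using that $E_{r+1} f$ is supported on a set of measure $\leq 2$ and has integral $\delta$, yields $\int (E_{r+1} f)^3 \gtrsim \delta^3$.

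The only step requiring genuine care is the initial pointwise comparison between $\rho_k * f$ and a dyadic conditional expectation; the remaining manipulations (tower property, an interval-by-interval Cauchy--Schwarz, and a final Jensen) are standard. Notably, this approach uses only $f \geq 0$ and $\supp f \subset [0,1]$, not the hypothesis $f \leq 1$.
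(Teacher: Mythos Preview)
Your argument is correct and is precisely the martingale approach the paper points to (the paper gives no proof of its own, deferring to Bourgain and to Durcik--Guo--Roos): pass to dyadic conditional expectations, use the tower property to replace $f$ by $E_{s+1}f$, then apply Jensen twice.

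One small caveat worth flagging, since you yourself identify the first step as the delicate one. Your pointwise comparison $\rho_k*f\gtrsim E_{k+1}f$ rests on the assumption that $\rho\ge c_0$ on $[-\tfrac12,\tfrac12]$. The paper only says $\rho$ is ``an appropriate bump function'' chosen so that $B_k$ smooths the one-sided average $r^{-1}\int_0^r$, so $\rho$ may very well be supported in $(0,\infty)$. In that case the dyadic interval of length $2^{-k-1}$ containing $x$ is \emph{not} contained in $[x-2^{-k},x]$ (its right endpoint exceeds $x$), and the inequality $\rho_k*f(x)\gtrsim E_{k+1}f(x)$ fails as written. This is a routine technicality rather than a real obstruction---one can, for instance, run the identical argument with the averages $A_k f(x):=2^{k}\int_{0}^{2^{-k}}f(x-t)\,dt$ in place of $E_{k+1}f$, which still form a (one-sided) martingale-type structure with the needed monotonicity and tower properties---but you should be explicit about which filtration you are using. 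Your observation that the hypothesis $f\le 1$ is never used is correct.
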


The second is a special case of a result of Li and Xiao, \cite{LX}; it's extension to non-flat curves is announced there, but a full proof can be found in forthcoming work \cite{GKL}. 

\begin{theorem}\label{t:Bil2}
Suppose that $P$ is as above. Then
\[ \| \sup_{k} B_k(f,g) \|_1 \lesssim_{P} \|f\|_2 \|g\|_2.\]
\end{theorem}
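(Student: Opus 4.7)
My plan is to combine a bilinear Fourier multiplier decomposition with a $TT^*$ argument; the curvature of $t \mapsto (t, t^2)$ supplies both decay at each scale and orthogonality across scales. I write the argument for $P(t) = t^2$, as the paper itself does.

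I begin by linearizing and dualizing. Since $\|F\|_1 = \sup_{\|h\|_\infty \leq 1}\bigl|\int Fh\bigr|$, a measurable selection $k = k(x)$ reduces the claim to the uniform bound
\[ \Bigl|\iint f(x-t)\, g(x-t^2)\, \rho_{k(x)}(t)\, h(x)\,dt\,dx\Bigr| \lesssim \|f\|_2\, \|g\|_2 \]
valid for every measurable $k : \RR \to \Z$ and every $h$ with $\|h\|_\infty \leq 1$. Writing $B_k(f,g)$ as a bilinear Fourier multiplier with symbol $m(2^{-k}\xi, 2^{-2k}\eta)$, where $m(\xi, \eta) = \int e(-t\xi - t^2 \eta) \rho(t)\,dt$, van der Corput gives $|m(\xi, \eta)| \lesssim |\eta|^{-1/2}$ at the critical point $t_0 = -\xi/(2\eta) \in \supp \rho$ and rapid decay elsewhere in $(\xi,\eta)$-space.

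Next I perform a joint Littlewood--Paley decomposition $f = \sum_j f_j$, $g = \sum_\ell g_\ell$ at frequencies $2^j$, $2^\ell$. In the low--low block $j \lesssim k$, $\ell \lesssim 2k$, the multiplier is essentially constant and I dominate $B_k(f_j, g_\ell)(x)$ pointwise by $Mf(x) \cdot Mg(x)$ (for $M$ the Hardy--Littlewood maximal operator); Cauchy--Schwarz in $L^1$ and $L^2$-boundedness of $M$ then yield the desired bound on this piece uniformly in $k$. For the mixed and high--high blocks I feed the stationary-phase decay of $m$ into a $TT^*$ computation, expanding $\|B_k(f_j, g_\ell)\|_2^2$ and applying van der Corput to the resulting oscillatory kernel to extract a gain of the form $2^{-\varepsilon(|j-k| + |\ell - 2k|)}$ in operator norm.

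The hard part is to make the $TT^*$ bound uniform in the measurable scale selection $k(x)$. The key structural fact I would exploit is that the quadratic scaling localizes each Littlewood--Paley block $(f_j, g_\ell)$ to only $O(1)$ scales $k$, since stationary phase requires $|\xi| \sim 2^k$ and $|\eta| \sim 2^{2k}$. Packaging this near-orthogonality across scales via a Cotlar--Stein argument, one obtains the square function bound
\[ \Bigl\|\Bigl(\sum_k |B_k(f,g)|^2\Bigr)^{1/2}\Bigr\|_1 \lesssim \|f\|_2\, \|g\|_2, \]
which trivially dominates $\|\sup_k |B_k(f,g)|\|_1$. The extension to general non-flat $P$, announced by Li--Xiao in \cite{LX} and developed in \cite{GKL}, proceeds along the same lines, using the uniform stationary-phase estimates for non-flat curves from \cite{Lie1} in place of the explicit quadratic analysis.
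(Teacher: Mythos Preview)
The paper does not prove Theorem~\ref{t:Bil2}: it is quoted as a special case of Li--Xiao~\cite{LX}, with the non-flat extension deferred to~\cite{GKL}. So there is no in-paper proof to compare against; the paper only isolates the essential input, Proposition~\ref{p:scale} (scale-type decay), which is the heart of the cited arguments.

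Your sketch has the right ingredients---pointwise domination of the low--low piece by $Mf\cdot Mg$, and oscillatory decay for the high pieces---but the mechanism you propose for passing from single-scale bounds to the maximal bound does not work as written. The square-function estimate
\[ \Bigl\|\Bigl(\sum_k |B_k(f,g)|^2\Bigr)^{1/2}\Bigr\|_{1} \lesssim \|f\|_2\|g\|_2 \]
is \emph{false}: the low--low contribution to $B_k(f,g)$ tends to a nonzero multiple of $f\cdot g$ as $k\to\infty$, so the $\ell^2$-sum in $k$ diverges. Cotlar--Stein is also not directly available, since it concerns linear operators between Hilbert spaces, not bilinear maps into $L^1$; there is no $TT^*$ in the usual sense here. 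What actually closes the argument (and what underlies the Li--Xiao/Lie proofs) is cruder: after removing the low--low main term, control $\sup_k$ of the error by $\sum_k$ in $L^1$, and use Proposition~\ref{p:scale} together with Littlewood--Paley orthogonality in each input factor to get
\[ \sum_k \|B_k(f_{k+m},g_{2k+m+p})\|_1 \lesssim 2^{-\epsilon m}\Bigl(\sum_k\|f_{k+m}\|_2^2\Bigr)^{1/2}\Bigl(\sum_k\|g_{2k+m+p}\|_2^2\Bigr)^{1/2} \lesssim 2^{-\epsilon m}\|f\|_2\|g\|_2, \]
summable in $m$ and $p$. So the orthogonality you need lives in the Littlewood--Paley pieces of $f$ and $g$, not in the family $\{B_k\}$ itself.
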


The key ingredient in proving this theorem is obtaining so-called ``scale-type" decay. Some notation: with $\Psi$ a smooth approximation of the indicator function of an annulus, define via the Fourier transform
\[ \widehat{f_m}(\xi) := \hat{f}(\xi) \cdot \Psi(2^{-m} \xi).\] 
The following is the key proposition; it first appeared essentially as in \cite[Theorem 2]{Lie1}, see also \cite[Propositions 3,4]{LX}.

\begin{proposition}\label{p:scale}
The following estimate holds for any $|p| \lesssim 1$
\[ \| B_k(f_{k+m},g_{2k+m+p}) \|_1 \lesssim_P 2^{-\epsilon m} \|f\|_2 \|g\|_2.\]
\end{proposition}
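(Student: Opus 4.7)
The plan is to combine van der Corput's lemma applied to the bilinear multiplier symbol with a duality and physical-space localization argument to upgrade the pointwise symbol decay into the required $L^1$-type bilinear estimate.

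First, I would pass to the Fourier side, writing
\[
B_k(f_{k+m}, g_{2k+m+p})(x) = \iint \widehat{f_{k+m}}(\xi)\, \widehat{g_{2k+m+p}}(\eta)\, m_k(\xi,\eta)\, e(x(\xi+\eta))\, d\xi\, d\eta,
\]
with symbol $m_k(\xi,\eta) := \int e(-t\xi - t^2\eta)\, \rho_k(t)\, dt$. On the Fourier supports $|\xi|\approx 2^{k+m}$, $|\eta|\approx 2^{2k+m+p}$, the substitution $t = 2^{-k}u$ converts $m_k$ into $\int e(-uA - u^2 B)\, \rho(u)\, du$ with $A = 2^{-k}\xi \approx \pm 2^m$ and $B = 2^{-2k}\eta \approx \pm 2^{m+p}$; since $|p|\lesssim 1$, the phase has $|\partial_u^2\Phi| = 2|B| \approx 2^m$ on the support of $\rho$, so van der Corput's lemma yields the pointwise bound $|m_k(\xi,\eta)|\lesssim_P 2^{-m/2}$, together with an explicit stationary-phase asymptotic $m_k(\xi,\eta) \approx 2^{-(m+p)/2}\, e(\xi^2/(4\eta))\, \chi(\xi,\eta)$ modulo lower-order terms.

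Second, because $|\xi+\eta|\approx|\eta|\approx 2^{2k+m+p}$ in the region of interest, the output $B_k(f_{k+m}, g_{2k+m+p})$ is itself Fourier-localized on the annulus of scale $2^{2k+m+p}$, hence morally constant at physical scale $2^{-(2k+m+p)}$. By duality,
\[
\|B_k(f_{k+m}, g_{2k+m+p})\|_1 = \sup_{\|h\|_\infty \leq 1} |\langle B_k(f_{k+m}, g_{2k+m+p}), h\rangle|,
\]
and the test function $h$ may be Fourier-projected onto the same annulus at no cost in $L^\infty$-norm. I would then run a $TT^*$ argument on the resulting trilinear pairing (freezing one of the three frequency-localized factors), using the extracted phase $e(\xi^2/(4\eta))$ and a further integration by parts in $\xi$ to extract the additional cancellation.

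The main obstacle is this last step. A na\"ive Plancherel-based bound on the trilinear pairing costs a factor of $2^{k/2}$ arising from the size of the $\xi$-support of $\widehat{f_{k+m}}$, which swamps the $2^{-m/2}$ gain whenever $k$ is large. This loss must be absorbed against the physical-space localization of the output at scale $2^{-(2k+m+p)}$ through a local $L^1$--$L^2$ Cauchy--Schwarz. In the Lie and Li--Xiao arguments this is implemented via a Gabor / wave-packet decomposition of the relevant phase-space region, with a careful accounting of wave-packet interactions producing the geometric gain $2^{-\epsilon m}$. I would follow their framework, using the simplification that $P(t) = t^2$ admits a closed-form stationary-phase asymptotic and requires only one additional integration by parts.
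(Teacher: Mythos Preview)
The paper does not supply its own proof of this proposition: it is quoted as a black box and attributed to \cite[Theorem~2]{Lie1} and \cite[Propositions~3,~4]{LX}. Your outline---van der Corput/stationary phase on the symbol $m_k$ to extract the $2^{-m/2}$ decay and the leading phase $e(\xi^2/(4\eta))$, duality against an $h$ projected to the output frequency annulus, and then a wave-packet/$TT^*$ analysis to absorb the $2^{k/2}$ Plancherel loss---is exactly the strategy carried out in those references, and you have correctly isolated both the source of the gain and the main obstacle. Since the paper itself defers to the literature at precisely the point where your sketch does (the Gabor/wave-packet accounting), there is nothing further to compare; your plan is consistent with the cited proofs.

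One small caveat: your claim that the output is Fourier-supported on the annulus $|\zeta|\approx 2^{2k+m+p}$ uses $|\xi|\ll|\eta|$, i.e.\ $k\gg 1$, which is the regime in which the proposition is actually invoked in \S3 but is not explicitly hypothesized in the statement. This is harmless here, but worth noting if you intend to write out a self-contained argument.
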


With these preliminaries in mind, we turn to the proof.

\section{Proof of Proposition \ref{p:Rothish} }

Proposition \ref{p:Rothish} will follow directly from the following proposition.

\begin{proposition}\label{p:key}
Suppose $A \subset [0,1]$ has $|A| = \delta$, and $k \geq l \gg \log(\delta^{-1})$ is sufficiently large.
Then there exists an absolute constant $1 \gg c_0 > 0$ so that if the following upper bound holds,
\begin{equation}\label{e:lb} \int_A \inf_{l \leq r \leq k} B_r(\mathbf{1}_A,\mathbf{1}_A)(x) \ll c_0 \delta^3, 
\end{equation}
then
\begin{equation}\label{e:lowerbound}
\| \widehat{ \mathbf{1}_A }(\xi) \cdot  \mathbf{1}_{\delta^C 2^l \lesssim |\xi| \lesssim \delta^{-C} 2^k} \|_2 + \| \widehat{ \mathbf{1}_A }(\xi) \cdot  \mathbf{1}_{\delta^C 2^{2l} \lesssim |\xi| \lesssim \delta^{-C} 2^{2k}} \|_2 \gtrsim \delta^3.
\end{equation}
\end{proposition}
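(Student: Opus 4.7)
The plan is to argue by contrapositive: assume both summands on the left of \eqref{e:lowerbound} are $\ll \delta^3$, and show $\int_A \inf_{l \leq r \leq k} B_r(\mathbf{1}_A,\mathbf{1}_A)(x)\,dx \gtrsim \delta^3$, contradicting \eqref{e:lb} once $c_0$ is small enough. To this end I would split $\mathbf{1}_A$ into three Fourier pieces for each of the two frequency bands: using the low-pass $\phi$ from the preliminaries, write $\mathbf{1}_A = L_1 + M_1 + H_1$ with Fourier cutoffs at $\delta^C 2^l$ and $\delta^{-C} 2^k$, and separately $\mathbf{1}_A = L_2 + M_2 + H_2$ with cutoffs at $\delta^C 2^{2l}$ and $\delta^{-C} 2^{2k}$. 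The assumed failure of \eqref{e:lowerbound} then gives $\|M_1\|_2,\|M_2\|_2 \ll \delta^3$.

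For the main term, observe that $L_1$ and $L_2$ have Fourier supports well below the scales $2^r$ and $2^{2r}$ appearing in $B_r$. Taylor-expanding both functions around $x$ inside the defining integral, each error term is $O(2^{-r}\cdot \delta^C 2^l) = O(\delta^C)$ and $O(2^{-2r}\cdot \delta^C 2^{2l}) = O(\delta^C)$, respectively, so
\[ B_r(L_1,L_2)(x) = L_1(x) L_2(x) + O(\delta^C) \]
pointwise and uniformly in $r \in [l,k]$. Lemma \ref{l:mart}, applied after a standard comparison between the bumps $\phi$ and $\rho$, then yields $\int \mathbf{1}_A \cdot L_1 \cdot L_2 \gtrsim \delta^3$.

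The remaining cross terms in the $3\times 3$ expansion of $B_r(\mathbf{1}_A,\mathbf{1}_A)$ must then be shown negligible in the maximal sense $\|\sup_r|\mathcal{E}_r|\|_1 \ll \delta^3$. The middle-frequency terms follow immediately from Theorem \ref{t:Bil2}: for instance $\|\sup_r B_r(M_1,\mathbf{1}_A)\|_1 \lesssim \|M_1\|_2\|\mathbf{1}_A\|_2 \ll \delta^3 \cdot \delta^{1/2}$. The high-frequency pieces require Proposition \ref{p:scale}: Littlewood--Paley decompose $H_1 = \sum_{j \geq k + C\log\delta^{-1}} (H_1)_j$, apply the scale decay $\|B_r((H_1)_j,\mathbf{1}_A)\|_1 \lesssim 2^{-\epsilon(j-r)}\|(H_1)_j\|_2\|\mathbf{1}_A\|_2$ to each piece, discretize $r$ to integers using the slow variation of $B_r$ in $r$, and sum via Cauchy--Schwarz to obtain $\|\sup_{l\leq r \leq k} B_r(H_1,\mathbf{1}_A)\|_1 \lesssim \delta^{\epsilon C}\delta^{1/2} \ll \delta^3$ for $C$ large enough. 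Non-resonant pairings contribute stronger decay, and the analysis for $B_r(\cdot,H_2)$ is entirely symmetric.

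Combining, $\inf_r B_r(\mathbf{1}_A,\mathbf{1}_A)(x) \geq L_1(x) L_2(x) - \sup_r|\mathcal{E}_r(x)|$, so $\int_A \inf_r B_r \geq \int \mathbf{1}_A L_1 L_2 - \|\sup_r|\mathcal{E}_r|\|_1 \gtrsim \delta^3$, completing the contradiction. The principal obstacle is precisely the $\sup_r$-version of the scale decay for the high-frequency error: Theorem \ref{t:Bil2} alone is too weak by a factor of roughly $\delta^{-5/2}$, so Proposition \ref{p:scale} must be lifted to a maximal statement, and the $\log(\delta^{-1})$ loss one incurs by discretizing $r$ presumably accounts for the extra logarithmic factor already visible in the main Proposition \ref{p:Rothish}.
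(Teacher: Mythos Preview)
Your contrapositive argument is correct and reaches the same conclusion, but the route differs from the paper's. The paper argues \emph{directly}: assuming \eqref{e:lb}, it introduces the complement $B=[0,1]\smallsetminus A$ and uses $B_r(\mathbf{1}_{[0,1]},\mathbf{1}_{[0,1]})\approx 1$ on $A$ to force
\[
\int_A \sup_{l\le r\le k}\big(|B_r(\mathbf 1_B,\mathbf 1_B)|+|B_r(\mathbf 1_A,\mathbf 1_B)|+|B_r(\mathbf 1_B,\mathbf 1_A)|\big)\ge \big(1-\tfrac{c_0}{1000}\delta^2\big)\delta.
\]
It then decomposes each of these three terms into $L,M,H$ pieces, shows via Lemma~\ref{l:mart} that the $LL$ contributions account for at most $\delta-c\delta^3$, kills the $H$ contributions by paraproduct bounds and Proposition~\ref{p:scale}, and concludes that the surviving $M$ contributions (which equal $\pm(\mathbf 1_A)_M$ after noting $(\mathbf 1_B)_M=-(\mathbf 1_A)_M$) must carry $\gtrsim\delta^3$; Cauchy--Schwarz and Theorem~\ref{t:Bil2} then yield \eqref{e:lowerbound}. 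Your version bypasses the complement trick entirely, decomposing $B_r(\mathbf 1_A,\mathbf 1_A)$ itself and using the smallness of $\|M_i\|_2$ as a hypothesis rather than a conclusion. This is arguably cleaner: Theorem~\ref{t:Bil2} is invoked once to dispose of the $M$ terms, whereas the paper invokes it at the very end to extract the $L^2$ lower bound. The analytic ingredients (Lemma~\ref{l:mart} for the main term, paraproduct/scale decay for the high terms) are identical.

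One point to tighten: your claimed single-scale bound $\|B_r((H_1)_j,\mathbf 1_A)\|_1\lesssim 2^{-\epsilon(j-r)}\|(H_1)_j\|_2\|\mathbf 1_A\|_2$ does not follow from Proposition~\ref{p:scale} as stated, since that proposition requires the second argument to be frequency-localised near $2^{2r+(j-r)}$. You must first split $\mathbf 1_A$ into its own Littlewood--Paley pieces and separate the resonant pairings (handled by Proposition~\ref{p:scale}) from the non-resonant ones (handled by the paraproduct estimate the paper records as \eqref{e:PP}); you gesture at this with ``non-resonant pairings contribute stronger decay,'' and once done the geometric sum $\sum_{r=l}^k 2^{-\epsilon(j-r)}\lesssim 2^{-\epsilon(j-k)}$ indeed controls the $\sup_r$ with no logarithmic loss, so your final remark about a $\log(\delta^{-1})$ loss here is unnecessary.
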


Before turning to the proof, we will make use of the following splitting. Set $k_\delta := k + C \log \delta^{-1}$ and $l_\delta := l - C \log \delta^{-1}$, and assume that we are interested in decomposing
\[ B_r(f,g) \]
for some $l \leq r \leq k$.
We split 
\[ f = f_L + f_M + f_H,\]
where
\begin{equation}\label{e:decomp}
\widehat{f_L} := \hat{f} \cdot \widehat{\phi_{l_\delta}} \text{ and } 
\widehat{f_H} := \hat{f} \cdot ( 1- \widehat{\phi_{k_\delta}}),
\end{equation}
and
\[ g = g_L + g_M + g_H,\]
where
\begin{equation}\label{e:decomp'}
\widehat{g_L} := \hat{g} \cdot \widehat{\phi_{2 l_\delta}} \text{ and } 
\widehat{g_H} := \hat{g} \cdot ( 1- \widehat{\phi_{2 k_\delta}}),
\end{equation}

Furthermore, for ease of presentation, we will drop all terms which contribute $\lesssim \delta^C$ as negligible.

With this in mind, we turn to the proof.
\begin{proof}[Proof of Proposition \ref{p:key}]
Suppose that \eqref{e:lb} holds. Then, with $B := [0,1] \smallsetminus A$ we have the following lower bound, provided that $l \gg \log(\delta^{-1})$ is sufficiently large:
\begin{equation}\label{e:3term} \int_A \sup_{l \leq r \leq k} |B_r(\mathbf{1}_{B},\mathbf{1}_{B})| + \sup_{l \leq r \leq k} |B_r(\mathbf{1}_{A},\mathbf{1}_{B})| + \sup_{l \leq r \leq k} |B_r(\mathbf{1}_{B},\mathbf{1}_{A})| \geq (1 - \frac{c_0}{1000} \cdot \delta^2 ) \cdot \delta.
\end{equation}
For each $f,g$, we decompose 
\[ \aligned 
B_r(f,g) & =  B_r(f_L,g_L)\\
& \qquad + 
B_r(f_L,g_M + g_H) + 
B_r(f_M,g) \\
& \qquad \qquad + B_r(f_H,g_L + g_M) + B_r(f_H,g_H). \endaligned \]
For $l \leq r \leq k$, $B_r(f_L,g_L) = f_L \cdot g_L$, up to pointwise errors of $\delta^C$; by Lemma \ref{l:mart} and trivial geometric considerations, taking into account the large size of $l\gg_\delta 1$, we deduce that
\[ \sum_{C,C' \in \{A,B\}^2 \smallsetminus \{A,A\}} \int_A  
\sup_{l \leq r \leq k} |B_r((\mathbf{1}_{C})_L,(\mathbf{1}_{C'})_L)| \leq \delta - c \delta^3,\]
where $c$ is essentially given by Lemma \ref{l:mart}.
We also observe that
\[ B_r(f_L,g_M + g_H) = f_L \cdot \mathcal{M} g_M, \]
again up to pointwise errors on the order of $\delta^C$ on $[0,1]$; here, $\mathcal{M}$ denotes an appropriate maximal function, pointwise bounded by the Hardy-Littlewood maximal function. Again using the large size of $l$, we have that $(\mathbf{1}_B)_M = -(\mathbf{1}_A)_M$ up to errors which have $L^1$ norm $\lesssim \delta^C$ when restricted to $[0,1]$, by Cauchy-Schwartz. Next, 
\begin{equation}\label{e:PP} B_r(f_H,g_L+g_M) = 2^{-\epsilon(k-r)} \cdot \delta^C \cdot \Pi_{r,k}(f_H,g_L + g_M), 
\end{equation}
for some paraproduct $\Pi_{r,k}$, where $\|\Pi_{r,k} \|_{L^2 \times L^2 \to L^1} \leq C_{k-r} \leq C$ for all $r,k$ and some absolute $C$, see \cite{Lie1}. Thus
\[ \int_A \sup_{l \leq r \leq k} |B_r(f_H,g_L +g_M)| \ll \delta^C. \]
It remains to analyze $B_r(f_H,g_H)$. If we express
\[ B_r(f_H,g_H) = \sum_{m,n \gg \log(\delta^{-1}), |m-n| \lesssim 1 } B_r( f_{k+m}, g_{2k+n}) + 
\sum_{m,n \gg \log(\delta^{-1}), |m-n| \gg 1 } B_r( f_{k+m}, g_{2k+n}), \]
then the second term is a sum of rapidly decaying paraproducts the sum of whose $L^1$ norms are $O(2^{-\epsilon(k-r)} \cdot \delta^C)$, while the first term has $L^1$ norm bounded by $2^{-\epsilon(k-r)} \cdot \delta^C$ by 
Proposition \ref{p:scale}. In particular, the lower bound \eqref{e:3term} forces
\[ \sum_{C,C' \in \{A,B\}^2 \smallsetminus \{A,A\}} 
\int_A (\mathbf{1}_C)_L \cdot \mathcal{M}\left( (\mathbf{1}_{C'})_M \right)+ \int_A \sup_r \left| B_r((\mathbf{1}_C)_M,\mathbf{1}_{C'}) \right| \gtrsim \delta^3.\]
By pointwise considerations, we may refine this to
\[ 
\int_A \mathcal{M} \left( (\mathbf{1}_{A})_M\right) + \int_A \sup_r \left|  B_r((\mathbf{1}_A)_M,\mathbf{1}_{A}) \right| +
\int_A \sup_r \left|  B_r((\mathbf{1}_A)_M,\mathbf{1}_{B}) \right|
\gtrsim \delta^3,\]
which yields the result by Cauchy-Schwartz and Theorem \ref{t:Bil2}.
\end{proof}

\typeout{get arXiv to do 4 passes: Label(s) may have changed. Rerun}

\end{document}